\crefname{hypothesis}{Hypothesis}{Hypotheses}
\title{Logarithmic Gradient Transformation and Chaos Expansion of It\^{o} Processes \thanks{Submitted to the editors \today.
\funding{This work was funded by Swiss National Science Foundation under contract no.~174060.}}}
\author{M. Hossein Gorji\thanks{MCSS, Department of Mathematics, EPF Lausanne 
  (\email{mohammadhossein.gorji@epfl.ch}).}}
\begin{document}

\maketitle

\begin{abstract}
\noindent Since the seminal work of Wiener \cite{wiener}, the chaos expansion has evolved to a powerful methodology for studying a broad range of stochastic differential equations. Yet its complexity for systems subject to the white noise remains significant. The issue appears due to the fact that the random increments generated by the Brownian motion, result in a growing set of random variables with respect to which the process could be measured. In order to cope with this high dimensionality, we present a novel transformation of stochastic processes driven by the white noise. In particular, we show that under suitable assumptions, the diffusion arising from white noise can be cast into a logarithmic gradient induced by the measure of the process. Through this transformation, the resulting equation describes a stochastic process whose randomness depends only upon the initial condition. Therefore the stochasticity of the transformed system lives in the initial condition and thereby it can be treated conveniently with the chaos expansion tools. 
\end{abstract}

\begin{keywords}
It\^{o} Process, Chaos Expansion, Fokker-Planck Equation.
\end{keywords}

\begin{AMS}
60H10, 35Q84, 60J60
\end{AMS}

\section{Introduction}
\noindent Often stochastic descriptions of natural or social phenomena lead to more realistic mathematical models. The introduced stochastic notion may either arise from the uncertainty in the model inputs, or from the underlying governing law. In particular, the white noise manifests itself in both circumstances e.g. as a random force acting on a deterministic system in the Landau-Lifschitz fluctuating hydrodynamics \cite{landau} or as a Markovian process describing rarefied gases \cite{gorji} or polymers \cite{ottinger}.   \\ \ \\
The Monte--Carlo methods are typically a natural choice for computational studies of the systems driven by the white noise. Yet the slow convergence rate of the brute-forth Monte--Carlo, motivates a quest for improved approaches. There exists an immense list of advanced Monte--Carlo techniques, each of which may yield to a substantial improvement over the conventional Monte--Carlo, provided certain regularities. One of the promising examples belongs to the Multi-Level Monte-Carlo approach \cite{giles} (and its variants \cite{tempone}). In short, MLMC makes use of abundant samples on a coarse scale discretization in order to improve the convergence rate of the fine scale one. This can be achieved by enforcing correlations between successive approximations; usually through employing common random numbers among them. \\ \ \\
Instead of producing numerical samples of a random variable however, one can expand the solution with respect to a set of (orthogonal) random functions which possess a known distribution \cite{karniadakis1}. The polynomial chaos and stochastic collocation schemes are among the main approaches built around this idea \cite{karniadakis2,hesthaven}. In particular, the polynomial chaos schemes transform the random differential equations to a set of deterministic equations, through which the evolution of the coefficients introduced in the polynomial expansion of the random solution is governed. Therefore by knowing the distribution of the resulting orthogonal functions, different statistics of the solution can be computed deterministically. While this approach may lead to efficient computations for equations pertaining a finite set of random variables, its application to the Brownian motion remains with a significant computational challenge. The problem arises due to the fact that the dimension of the expansion should grow in time in order to keep the solution measurable with respect to the Brownian motion \cite{hou}. Hence, the cost of the chaos expansion schemes grows here significantly, in comparison to the counterpart scenario where the solution remains measurable with respect to a fixed set of random variables.  \\ \ \\
This paper addresses the problem of deterministic solution algorithms for systems subject to the white noise, in an idealized It\^{o} process setting. Here we introduce a novel transformation, where the randomness of the Brownian motion is described as a propagation of an (artificial) uncertainty of the initial condition. We show that the measure induced by the transformed system is consistent with the one resulting from the It\^{o} process, in the moment sense. The key ingredient is the fact that both the transformed and the original process result in an identical Fokker--Planck equation for their probability densities. Afterwards, since the transformed system describes an Ordinary Differential Equation (ODE) with an uncertain initial condition, a chaos expansion can be applied in a straight-forward manner.  \\ \ \\
The paper is structured as the following. First in the next section we present our setting for the It\^{o} process and besides a shoer review of its corresponding Wiener-chaos expansion. In \cref{sec:main}, the gradient transformation of the white noise is motivated and introduced. In the follow up \cref{sec:theory}, some theoretical aspects of the transformation are justified. In particular, the solution existence and uniqueness of the transformed process is discussed. Therefore in \cref{sec:chaos}, the Hermite chaos expansion of the transformed process is devised. The paper concludes with final remarks and future outlooks.

\section{Review of the Ito Process}
\label{sec:rev}
To start, a set of assumptions on the coefficients of the It\^{o} process, necessary for our analysis is provided in \cref{sec:gen}. Next, the conventional chaos expansion of the It\^{o} process is reviewed in \cref{sec:conv}.
\subsection{General Setting}
\label{sec:gen}
\noindent We focus on a simple prototype of stochastic processes driven by the white noise. Let $(\Omega,\mathcal{F}^{U_0}_{t},\mathcal{P})$ be a complete probability space, where $\mathcal{F}^{U_0}_t=\mathcal{F}_t \otimes \mathcal{F}^{U_0}$ denotes the $\sigma$-algebra on the subsets of $\Omega=\Omega_1 \cup \Omega_2$. Here $\{ \mathcal{F}_t\}_{t\ge 0}$ is an increasing family of $\sigma$-algebras induced by the $n$-dimensional standard Brownian path $W(.,.): \mathbb{R}^{+} \times \Omega_1\to \mathbb{R}^n$ , and $\mathcal{F}^{U_0}$ the $\sigma$-algebra generated by the initial condition $U_0(.): \Omega_2 \to \mathbb{R}^n$. \\ \ \\
We consider an It\^{o} diffusion process
\begin{equation}
dU_i (t,\omega)=b_i(U)  dt+\beta dW_i (t,\omega) ,
\label{eq:ito-main}
\end{equation}
governing the evolution of the $\mathcal{F}^{U_0}_{t}$-measurable random variable $U(.,.): \mathbb{R}^+\times \Omega \to \mathbb{R}^{n}$, with the initial value $U_0$ and the law $\mathcal{P}$. \\ \ \\
Throughout this manuscript, we need certain regularity assumptions on the drift $b(.) : \mathbb{R}^n \to \mathbb{R}^n$, the diffusion coefficient $\beta\in \mathbb{R}$ and the initial condition $U_0$. \\ \ \\
We require $\beta\neq 0$ and that the drift $b(x)=-\nabla \Psi(x)$ with $\Psi(.) \in{{C}^\infty_{b}}(\mathbb{R}^n)$, where $C_{b}^{\infty}$ denotes the space of bounded functions with bounded derivative of all orders. 
Finally, we assume that the initial condition is deterministic hence its probability density $f_{U_0}(u)=\delta (u-U_0)$, where $\delta(.)$ is the n-dimensional Dirac delta and $U_0\in \mathbb{R}^n$. \\ \ \\
For the above-described setting, many interesting properties can be shown for the It\^{o} process, including the following.
\begin{remark}
\label{rem:sde}
 It is a classic result in the theory of Stochastic Differential Equations (SDEs) that since $\Psi(.) \in{{C}^\infty_{b}}(\mathbb{R}^n)$ and $\beta$ is assumed to be a constant, Eq.~\cref{eq:ito-main} has a solution with a bounded variance for all $t \ge 0$, which is unique in the mean square sense. Furthermore, the process is Feller continuous resulting in smooth variation of an expectation of the solution with respect to the initial condition \cite{oksendal}. 
 \end{remark}
\begin{remark}
\label{rem:cont}
Based on different results in the Malliavin calculus, since the coefficients $b$ and $\beta$ fulfill the Horm\"{a}nder criterion and furthermore $b$ has bounded derivatives, the Borel measure generated by the process $\mu_{U}=\mathcal{P}(U^{-1})$ is infinite times differentiable. Therefore the probability density $f_U(u;t)du=d\mu_U(u;t)$ is well-defined and $\mu_U(.;t),f_U(.;t)\in C^\infty(\mathbb{R}^n)$, provided $t>0$; see e.g. Theorem 2.7 in \cite{watanabe}. 
\end{remark}
\begin{remark}
\label{rem:fisher}
Due to Corollary 4.2.2. of \cite{bogachev}, since $\mu_U$ is three times differentiable, the Fisher information 
\begin{eqnarray}
I(f)&:=&\int_{\mathbb{R}^n}\frac{1}{f}\nabla_x f \cdot \nabla_x f dx
\end{eqnarray}
associated with the density $f_U$ is bounded at $t>0$.
\end{remark}
\begin{remark}
\label{rem:fp}
The density $f_U$ evolves according to the Fokker-Planck equation (forward-Kolmogorov equation)
\begin{eqnarray}
\label{eq:fp}
\frac{\partial f_U(u;t)}{\partial t}&=&-\frac{\partial}{\partial u_i}\left(b_i (u) f_U(u;t)\right)+\frac{\beta^2}{2}\frac{\partial^2}{\partial u_i \partial u_i }f_U(u;t)
\end{eqnarray}
and the measure $\mu_U$ is governed by the transport equation
\begin{eqnarray}
\label{eq:trans}
\frac{\partial \mu_U(u;t)}{\partial t}&=&-b_i (u) \frac{\partial}{\partial u_i}\mu_U(u;t)+\frac{\beta^2}{2}\frac{\partial}{\partial u_i\partial u_i }\mu_U(u;t).
\end{eqnarray}
Since $\Psi(.) \in{{C}^\infty_{b}}(\mathbb{R}^n)$ and $\beta \neq 0$, both above-mentioned equations have unique solutions (for uniqueness results see \cite{manita,diperna,bogachev2}). Notice that the Einstein index convention is employed here and henceforth, to economize the notation.
\end{remark}
\noindent In comparison to the natural setting of It\^{o} processes, we have introduced strong assumptions on $\Psi$ and $\beta$. Though not straight-forward, the generalization of our analysis may become possible as long as the corresponding It\^{o} process has a unique solution with bounded variance and its corresponding Fisher information is bounded (e.g. by using Lyapunov functionals \cite{khasminskii}). But to keep the study focused on the main idea, we postpone the generalization to the follow up studies. \\ \ \\ 
In typical applications in scientific computations, one is interested in some moments of the solution $U$, which are in the form of an expectation $\mathbb{E}[g (U(t,\omega))]$ of some smooth function $g(.)\in C^{\infty}(\mathbb{R}^n)$. 
\subsection{Wiener Chaos Expansion}
\label{sec:conv}
\noindent Due to slow convergence rates of Monte-Carlo methods, deterministic solution algorithms for stochastic processes can be attractive. Besides stochastic collocation methods \cite{zhang}, a Wiener chaos expansion of Eq.~\eqref{eq:ito-main} is possible due to the Cameron-Martin theorem \cite{cameron}, as carried out e.g. by Rozovskii, Hou and others \cite{hou, rozovskii,karniadakis1}. It is useful for our sequel analysis to provide an overview of this expansion. 
To simplify the notation we explain the chaos expansion of $U$ in a one dimensional setting $n=1$. For a multi-dimensional case, the following can be applied for each component of the solution. \\ \ \\
The random events with respect to which the solution $U$ is measurable are due to the initial condition $U_0$ and the corresponding Brownian integral $\beta\int_0^t dW(s,\omega_2)$, therefore for a deterministic $U_0$, $U$ can be expressed as 
\begin{eqnarray}
U(t,\omega)&=&M\left(U_0, \int_0^t dW(s,\omega),t\right). 
\end{eqnarray}
The integral of the Brownian path $\mathcal{I}(\omega):= \int_{s=0}^{t} dW(s,\omega)$, can be expanded as
\begin{eqnarray}
\mathcal{I}(\omega)&=&\sum_{j=1}^\infty\xi_j(\omega) \int_{0}^t\phi_j(s)ds,
\label{eq:exact-wiener}
\end{eqnarray}   
where $\{\phi_j(s)\}$ is a sequence of orthogonal functions in $L^2([0,t])$ and $\xi_j$ are independent normally distributed random variables. \\ \ \\
Suppose $P^{(l)}=\left\{t^{(l)}_j=\bigg(jt/{m_l} \ \ j\in\{1,...,m_l\} \bigg) \right\}$ is a partition for the time interval $(0,t]$. Intuitively the Brownian motion generates an independent normally distributed random variable at each $t^{(l)}_j\in P^{(l)}$. Along this picture let 
\begin{eqnarray}
\hat{\mathcal{I}}^{(l)}&=&\sum_{j=1}^{m_l}\xi_j\int_{0}^t\phi_j(s)ds
\label{eq:approx-wiener}
\end{eqnarray}
 be an approximation of the integral \eqref{eq:exact-wiener} corresponding to the partition $P^{(l)}$. It can be shown that 
 \begin{eqnarray}
\mathbb{E}\left[\left(\mathcal{I}-\hat{\mathcal{I}}^{(l)}\right)^2\right]< C \frac{t}{m_l},
 \end{eqnarray}
 where $C<\infty$ is some constant  \cite{luo}.\\ \ \\
 Analogously, let $\hat{U}^{(l)}$ be an approximation of $M$, computed on the partition $P^{(l)}$. Therefore due to Eq.~\eqref{eq:approx-wiener}, the solution at time $t$ can be approximated as a function $\hat{U}^{(l)}(t,\xi_1,...,\xi_{m_l})$ with a mean square error of $\mathcal{O}(t/m_l)$ (due to the truncation introduced in Eq.~\eqref{eq:approx-wiener}). At this point the Wiener chaos expansion can be applied to $\hat{U}^{(l)}$; as explained in the following. \\ \ \\
In order to expand $\hat{U}^{(l)}$ with respect to the Hermite basis, suppose $\xi=(\xi_1,...,\xi_{m_l})$ is an $m_l$-dimensional normally distributed random variable and let
 $\alpha=(\alpha_1,...,\alpha_p)\in \mathcal{J}^p_{m_l}$ denote an index from the set of multi-indices 
\begin{eqnarray}
\label{eq:multi}
\mathcal{J}^p_{m_l}&=&\left\{\alpha=(\alpha_i,1\le i\le {m_l}) \bigg \vert \alpha_i\in\{0,1,2,...,p\},|\alpha|=\sum_{i=1}^{m_l}\alpha _i\right\}.
\end{eqnarray}
Let the $|\alpha|$-order multi-variate Hermite polynomial
\begin{eqnarray}
{H}_\alpha(\xi)&=&\prod_{i=1}^{m_l}\hat{H}_{\alpha_i}(\xi_i)
\label{eq:def-wick}
\end{eqnarray}
be a tensor product of the normalized $\alpha_i$-order Hermite polynomials $\hat{H}_{\alpha_i}(\xi_i)$. 
According to the Cameron-Martin theorem, $\hat{U}^{(l)}(t,\xi)$ admits the following Hermite expansion
\begin{eqnarray}
\hat{U}^{(l)}(t,\xi)=\lim_{p \to \infty}\sum_{\alpha \in \mathcal{J}^p_{m_l}} {\hat{u}^{(l)}}_{\alpha}(t) H_\alpha(\xi),
\label{eq:strong-Hermite}
\end{eqnarray}
where ${\hat{u}^{(l)}}_\alpha(t)=\mathbb{E}[\hat{U}^{(l)}(t,\xi) H_\alpha(\xi)]$. \\ \ \\
In fact the expansion \eqref{eq:strong-Hermite} provides a means to project the randomness of the solution $U(t,\omega)$ into the Hermite basis. As a result, the It\^{o} process is transformed to a set of deterministic ODEs for the coefficients $\hat{u}^{(l)}_{\alpha}(t)$ and thus the expectations $\mathbb{E}[g(U(t,\omega)]\approx\mathbb{E}[g(\hat{U}^{(l)}(t,\xi))]$ can be computed deterministically. However in order to keep the order of the approximation introduced in the expansion \eqref{eq:approx-wiener} constant, $m_l$ should grow linearly with respect to $t$. So does the dimension of the expansion \eqref{eq:strong-Hermite}, as $m_l$ shows up in the order of the Hermite polynomials.  
Thus unless short time behavior of the solution is of interest, complexity of the Wiener chaos expansion of the It\^{o} process may become prohibitive; even though the number of Hermite polynomials can be reduced significantly through sparse tensor compressions \cite{schwab}.  \\ \ \\
A more general insight about the problem can be sought by considering the fact that a smooth function of an $n$-dimensional random process Brownian path $f(W(t,\omega))$ at time $t=T$ is measurable with respect to the Borel $\sigma$-algebra on $\Omega={\left(\mathbb{R}^{n}\right)}^{[0,T]}$  \cite{oksendal}. Therefore in order to devise a chaos expansion of $f$, the orthogonal functions should span a rather high dimensional space $L^2(\Omega)$. 
\section{Main Result}
\label{sec:main}
\noindent The main idea of this work is to find an alternative SDE with a similar probability density as the one generated by the It\^{o} process, which yet remains measurable with respect to the $\sigma$-algebra induced by its initial condition. \\ \ \\
More precisely, consider again the partition $P^l=\{0=t^l_1<t_2^l<...<t^l_{m_l}=t\}$ for the time interval $[0,t]$ with $|P^l|\to 0$ as $l\to \infty$. Obviously the solution of the It\^{o} process $U(t,\omega)$ is measurable with respect to the family of $\sigma$-algebras  
\begin{eqnarray}
\{\mathcal{F}^{U_0}_{t^l_1}, \mathcal{F}^{U_0}_{t^l_2}...,\mathcal{F}^{U_0}_{t^l_{m_l}} \} \ \ \ \textrm{as} \ \ \ l\to \infty. \nonumber
\end{eqnarray}
However if we are only interested in some expectation $\mathbb{E}[g(U(t,\omega))]$ at time $t$, the knowledge of the Borel measure $\mu_U(B;t)=\mathcal{P}\{U^{-1} (t,B)\}$ where $B\in \mathcal{B}^n$, is sufficient. Note that $\mathcal{B}^n$ is the Borel $\sigma$-algebra on $\mathbb{R}^n$. Let $f_U(u;t)$ be the corresponding probability density i.e. $f_U(u;t)du=d\mu_U(u;t)$, therefore 
\begin{eqnarray}
\mathbb{E}[g(U(t,\omega))]&=&\int_{\mathbb{R}^n}f_U(u;t)g(u)du. \nonumber
\end{eqnarray}   
\noindent Suppose the random variable $X(t,\omega): \mathbb{R}^+ \times \Omega \to \mathbb{R}^n$ belongs to a complete probability space $(\Omega,\mathcal{G},\mathcal{Q})$, and generates a Borel measure $\mu_X=\mathcal{Q}(X^{-1})$. Let the probability density be $f_X(x;t)dx=d\mu_X$. 
We propose that under suitable assumptions on $f_{X}(x;0)$ (as explained in the following section), the solution of the transformed It\^{o} process
\begin{eqnarray}
\frac{d}{dt}X_i(t,\omega)&=&b_i(X)-\frac{1}{2} \beta^2 \left [\nabla_{x_i} \log f_X(x;t) \right ]_{x=X(t,\omega)}
\label{eq:weak-ito}
\end{eqnarray}
with the initial condition $X_0(\omega):\Omega \to \mathbb{R}^n$, uniquely exists for all $t$. Furthermore the solution is consistent with the It\^{o} process in a sense that for an arbitrary smooth $g\in C^\infty (\mathbb{R}^n)$ we have
 \begin{eqnarray}
\mathbb{E}[g(X(\omega,t))]&=&\mathbb{E}[g(U(\omega,t))],
 \end{eqnarray}
where $U$ is the solution of the It\^{o} process with the initial condition $U_0=X_0$. \\ \ \\
Let us first review the motivation behind this transformation. Due to It\^{o}'s lemma, the probability density generated by the It\^{o} process follows the Fokker-Planck equation (see \cref{rem:fp})
\begin{eqnarray}
\frac{\partial f_U(u;t)}{\partial t}+\frac{\partial}{\partial u_i}\left(b_i (u) f_U(u;t)\right)&=&\frac{1}{2}\frac{\partial^2}{\partial u_i \partial u_j }\left(\beta^2f_U(u;t)\right).
\end{eqnarray}
By rearranging the diffusion term one can see that
\begin{eqnarray}
\frac{\partial f_U(u;t)}{\partial t}+\frac{\partial}{\partial u_i}\left\{\bigg(b_i(u)-\frac{1}{2}\beta^2 \frac{\partial}{\partial u_j} \log(f_U(u;t))\bigg)f_U(u;t)\right\}&=&0, \nonumber
\end{eqnarray}
resulting in a stochastic process similar to Eq.~\eqref{eq:weak-ito}. Intuitively we observe that the effect of the diffusion on the probability density is equivalent to an advection induced by the gradient $\nabla_u \log f_U$. We refer to this transformation as {\it logarithmic gradient transformation}. 
Obviously this transformation needs to be justified. However before proceeding to the technical discussion in \cref{sec:theory}, let us provide some physical motivations behind the logarithmic gradient transformation. \\ 
\noindent Suppose   $\exp\left({-{2\Psi(x)}/{\beta^2}}\right)\in L^1(\mathbb{R}^n)$ and hence the stationary density
\begin{eqnarray}
f_{st}(x)&=&\mathcal{Z}\exp\left({-\frac{2\Psi(x)}{\beta^2}}\right)  
\end{eqnarray}
is well-defined.
Therefore the introduced process generates the paths $(t,X(t,\omega))$ according to
\begin{eqnarray}
\frac{d}{dt}X_i(\omega,t)&=&-\frac{\beta^2}{2}\nabla_x \log \left(\frac{f_X(x;t)}{f_{st}(x)}\right) \bigg\vert_{x=X(\omega,t)} \nonumber 
\end{eqnarray}
which is a gradient flow induced by the potential $\phi=\log ({f_X}/{f_{st}}) $. This potential is connected to the Kullback-Leibler distance (entropy distance)
\begin{eqnarray}
d_{KL}(t)&=&\int_{\mathbb{R}^n}f_X(x;t) \log \left(\frac{f_X(x;t)}{f_{st}(x)}\right) dx=\mathbb{E}[\phi(X)] \nonumber 
\end{eqnarray}
between the two densities $f_X$ and $f_{st}$ \cite{kullback,otto}.
Therefore from the physical point of view, the logarithmic gradient transformation generates a gradient flow in order to minimize the entropy distance $d_{KL}$ between the current state $f_X$ and $f_{st}$. \\ \ \\
\section{Theoretical Justifications}
\label{sec:theory}
\noindent The following arguments establish a connection between solutions of the main It\^{o} process i.e. Eq.~\eqref{eq:ito-main} and the transformed one Eq.~\eqref{eq:weak-ito}. 
\subsection{Regularity of the Ito Process}
\noindent To start, note that in order to make sense of Eq.~\eqref{eq:weak-ito}, $f_U$ should admit certain regularities. 
Let us introduce a class of admissible probability densities for a measurable $f(x)$ as
\begin{eqnarray}
K_1&:=&\bigg\{ f(x) : \mathbb{R}^n\to (0,\infty)\  \bigg \vert \ \nabla \log{f}\in C_l^{\infty}(\mathbb{R}^n),\ M(f)<\infty,I(f)<\infty \bigg\},
\end{eqnarray}
where
\begin{eqnarray}
M(f)&=&\int_{\mathbb{R}^n}fx^2dx \nonumber  
\end{eqnarray}
and $C_l^{\infty}$ is the space of infinite times differentiable functions, with at most linear growth.
\noindent The next lemma provides a link between $f_U$ and $K_1$. 
\begin{lemma} \label{lemma:space}
Consider $U^\epsilon(t,\omega)$ to be the solution of the It\^{o} process \eqref{eq:ito-main} in the probability space $(\Omega,\mathcal{F}_t^{U^\epsilon_0},\mathcal{P}^\epsilon)$ with a drift $b=-\nabla \Psi$, $\Psi(.) \in C_b^\infty(\mathbb{R}^n)$ and a diffusion $\beta \neq 0$. Suppose the initial condition reads $U_0^\epsilon=U_0+\epsilon Z$, where $U_0\in \mathbb{R}^n$ is deterministic, $Z(\omega)\in \mathbb{R}^n$ is a normally distributed random variable and $\epsilon \in \mathbb{R}$ is a small, arbitrary chosen non-zero constant. \\ \ \\
Let $f_{U^\epsilon}(u;t)=d\mathcal{P}^\epsilon\left({U^\epsilon}^{-1}\right)$ be the probability density of the process, therefore \\
\begin{eqnarray}
\label{in:reg-ito}
f_{U^\epsilon}(.;t)\in K_1,
\end{eqnarray}
for $t\in [0, \infty)$.
\end{lemma}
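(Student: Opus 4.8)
The plan is to verify the four defining properties of $K_1$ separately, splitting the argument into the degenerate initial time $t=0$ and the regularized regime $t>0$, and to isolate the pointwise linear growth of $\nabla\log f_{U^\epsilon}$ as the one genuinely new estimate. I would first dispose of $t=0$. Since $U_0^\epsilon=U_0+\epsilon Z$ with $Z$ normal and $\epsilon\neq 0$, the density $f_{U^\epsilon}(\cdot;0)$ is the Gaussian $\mathcal{N}(U_0,\epsilon^2 I)$, which is strictly positive and $C^\infty$, so that $\log f_{U^\epsilon}(\cdot;0)$ is smooth; moreover $\nabla\log f_{U^\epsilon}(x;0)=-(x-U_0)/\epsilon^2$ is exactly linear and hence lies in $C_l^\infty$, the second moment is finite, and the Fisher information equals $n/\epsilon^2<\infty$. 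This is precisely where $\epsilon\neq 0$ enters: a deterministic $U_0$ would give a Dirac mass of infinite Fisher information, so regularizing the initial datum is exactly what places the family in $K_1$ at $t=0$.

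For $t>0$, three of the four properties follow from the cited regularity results. Since $\beta\neq 0$ the Fokker--Planck operator of \cref{rem:fp} is uniformly elliptic and the hypotheses of \cref{rem:cont} hold, so $f_{U^\epsilon}(\cdot;t)\in C^\infty(\mathbb{R}^n)$; strict positivity follows by writing $f_{U^\epsilon}(\cdot;t)$ as the strictly positive uniformly elliptic transition kernel applied to the strictly positive Gaussian initial density, whence $\log f_{U^\epsilon}(\cdot;t)$ and $\nabla\log f_{U^\epsilon}(\cdot;t)$ are well defined and smooth. Finiteness of the second moment $M(f_{U^\epsilon})$ follows from the bounded variance of \cref{rem:sde} together with boundedness of the drift $b=-\nabla\Psi$, which keeps the mean finite, so that $\mathbb{E}[|U^\epsilon(t)|^2]<\infty$. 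Finiteness of the Fisher information $I(f_{U^\epsilon})$ at each $t>0$ is exactly \cref{rem:fisher}.

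The remaining and hardest point is the at-most-linear growth of $\nabla\log f_{U^\epsilon}(\cdot;t)$, i.e. its membership in $C_l^\infty$; the cited remarks do not suffice, since \cref{rem:cont} yields only smoothness and \cref{rem:fisher} only an $L^2(f_{U^\epsilon}\,dx)$ bound on the score, neither of which controls $|\nabla\log f_{U^\epsilon}(x;t)|$ pointwise as $|x|\to\infty$. To obtain the pointwise bound I would pass through the transition kernel $p(t,x,y)$ and the representation $f_{U^\epsilon}(y;t)=\int_{\mathbb{R}^n}p(t,x,y)f_{U^\epsilon}(x;0)\,dx$. Because $b=-\nabla\Psi$ is a gradient drift with $\Psi$, $\nabla\Psi$ and $\Delta\Psi$ all bounded, the Girsanov correction relating $p$ to the driftless Gaussian kernel $G_{\beta^2 t}$ stays between positive constants, giving matched two-sided bounds $c_1 G_{\beta^2 t}(y-x)\le p(t,x,y)\le c_2 G_{\beta^2 t}(y-x)$; combined with the companion gradient estimate $|\nabla_y p(t,x,y)|\le C\,t^{-1}(1+|y-x|)\,G_{c\beta^2 t}(y-x)$, differentiating under the integral and forming the ratio $|\nabla_y f_{U^\epsilon}(y;t)|/f_{U^\epsilon}(y;t)$ yields $|\nabla\log f_{U^\epsilon}(y;t)|\le C_t(1+|y|)$, the extra factor $|y-x|/t$ integrating against the Gaussian datum to a quantity that grows at most linearly in $|y|$. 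The main obstacle is precisely this step: upgrading the quadratic tail control of $\log f_{U^\epsilon}$ to a linear pointwise bound on its gradient requires matching the Gaussian exponents in the numerator and denominator of the score ratio, and the matched-constant kernel bounds above are what make that matching possible. Assembling the $t=0$ and $t>0$ cases then gives $f_{U^\epsilon}(\cdot;t)\in K_1$ for all $t\in[0,\infty)$.
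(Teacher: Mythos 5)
Your proposal is correct and follows essentially the same route as the paper: the $t=0$ case via the explicit Gaussian, the $t>0$ smoothness/positivity/moment/Fisher-information properties via the cited remarks and the Girsanov representation, and the linear growth of the score via the observation that the bounded Girsanov correction (using $\Psi,\nabla\Psi,\Delta\Psi\in C_b$) sandwiches $f_{U^\epsilon}(\cdot;t)$ between constant multiples of an explicit Gaussian whose log-gradient is linear. The only cosmetic difference is that the paper writes this out as a discretized path integral and convolves the Gaussian factors explicitly into $\mathcal{M}_{\epsilon^2+t}(|u-U_0|)$, whereas you phrase it as two-sided Aronson-type kernel bounds with matched exponents; the underlying mechanism is identical.
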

\begin{proof} 
Note that the initial condition $U_0^\epsilon$ has a Gaussian probability density of the form
\begin{eqnarray}
f_{U_0^\epsilon}(u)=\mathcal{M}_{\epsilon}\left(|u-U_0|\right),
\end{eqnarray}
where
\begin{eqnarray}
\label{eq:gauss}
\mathcal{M}_\epsilon (h)&:=&\frac{1}{(\sqrt{2\pi}|\epsilon| )^{n}}\exp\left(-\frac{h^2}{2\epsilon^2}\right).
\end{eqnarray}
It is straight-forward to see that $\mathcal{M}_{\epsilon}\left(|u-U_0|\right) \in K_1$ and thus we only need to prove the claim \eqref{in:reg-ito} for $t>0$. Notice that here and afterwards, $|\ . \ |$ denotes the Euclidean norm. \\ \ \\
First let us show that $\log f_{U_0^\epsilon}(.;t>0)\in C^{\infty}(\mathbb{R})$. According to \cref{rem:sde}-\cref{rem:fisher} at each $t>0$ we have $f_{U_0^\epsilon}(.;t)\in C^{\infty}(\mathbb{R})$, $I(f_{U_0^\epsilon})<\infty$ and $M(f_{U_0^\epsilon})<\infty$. Hence it is sufficient to prove $f_{U_0^\epsilon}(.; t)>0$, for $t>0$. For that, we make use of the Girsanov transformation. But before proceed, to prevent unnecessary notational complications we set $\beta =1$ for the followings. \\ \ \\
Let $W^\epsilon(t,\omega)$ be a standard n-dimensional Brownian process with the initial condition $U^\epsilon_0$ and the law $\mathcal{W}^\epsilon$. Then since $b(.)\in C_{b}^\infty(\mathbb{R}^n)$, we have
\begin{eqnarray}
\mathbb{E}\left[\exp\left(\frac{1}{2}\int_0^{T} b_i (W^\epsilon(t,\omega))b_i(W^\epsilon(t,\omega)dt \right)\right]&<&\infty,
\end{eqnarray}
for any finite $T$.
 Therefore the process
\begin{eqnarray}
Z(t,\omega)&:=&\exp\left(-\int_0^t b_i(W^\epsilon(s,\omega))dW^\epsilon_i(s,\omega)-\frac{1}{2}\int_{0}^t b^2(W^\epsilon(s,\omega))ds \right) \nonumber \\
\end{eqnarray}
is a martingale for $t\in [0,T)$ \cite{oksendal}. It follows from the Girsanov theorem that
\begin{eqnarray}
d\mathcal{P}^\epsilon(t,\omega)&=&Z(t,\omega)d\mathcal {W}^\epsilon(t,\omega).
\end{eqnarray}
Since $d\mathcal{W}^\epsilon$ is a Gaussian measure, it is strictly positive for $t>0$, and hence $d\mathcal{P}>0$. It is then straight-forward to check that $f_{U_0^\epsilon}(u;t)>0$, for any $u\in \mathbb{R}^n$, provided $t>0$. \\ \ \\
Now the final piece is to prove 
\begin{eqnarray}
|\nabla_u \log f_{U^\epsilon}(u;t)| &\leq& C(t,U_0)\left(|u|+1\right)
\end{eqnarray}
for every $u\in \mathbb{R}^n$, $t>0$ and some constant $C(t,U_0)<\infty$ which depends on $t$ and the initial condition $U_0$. Consider the partition 
\begin{eqnarray}
P^{(l)}&=&\left\{t^{(l)}_j=\bigg(jt/{m_l} \ \ j\in\{1,...,m_l\} \bigg) \right\}
\end{eqnarray}
for the interval $(0,t]$ and $\Delta t^{(l)}=t/{m_l}$. Suppose ${Z}^{(l)}$ is the projection of the martingale $Z(t,\omega)$ on the partition $P^{(l)}$. Using It\^{o}'s lemma, we get
\begin{eqnarray}
Z^{(l)}(t,\omega)&=&\exp\bigg(\Psi(W^\epsilon(0,\omega))-\Psi(W^\epsilon(t,\omega))\bigg) \nonumber \\ 
&&\exp\left(\frac{1}{2}\sum_{j=1}^{m_l}\left({b^\prime}(W^\epsilon(t_j^{(l)},\omega)-b^2(W^\epsilon(t_j^{(l)},\omega))\right)\Delta t^{(l)}\right), \nonumber \\
\end{eqnarray}
where ${b^\prime}=\textrm{div}\{b\}$. In terms of the density $f_{U^\epsilon}$, the Girsanov transformation  yields
\begin{eqnarray}
\label{eq:girsanov-f}
f_{U^\epsilon}(u_{m_l};t)&=&e^{-\Psi(u_{m_l})}\underbrace{\int_{\mathbb{R}^n}...\int_{\mathbb{R}^n}}_{m_l \ \textrm{times}}\bigg(e^{\Psi(u_0)+1/2\Delta t^{(l)}\sum_{j=0}^{m_l-1}\left(b^\prime(u_j)-b^2(u_j)\right)}\nonumber \\
&&\mathcal{M}_\epsilon (|u_0-U_0|)\prod_{i=0}^{m_l-1}\mathcal{M}_{\Delta t^{(l)}} (|u_{i+1}-u_i|)\bigg)du_0du_1...du_{m_l-1},
\end{eqnarray}
as $m_l\to \infty$, where $\mathcal{M}$ is the Gaussian density defined in Eq.~\eqref{eq:gauss}. Since $\Psi \in C^\infty_b$, $\exp(\Psi(u_0)+1/2\Delta t^{(l)}\sum_{j=0}^{m_l-1}\left(b^\prime(u_j)-b^2(u_j)\right)$ is bounded above and below by some $S(t) < \infty$ and $I(t)>0$, respectively. Therefore we have
\begin{eqnarray}
\bigg \vert \nabla_{u_{m_l}}&&\log f_{U^\epsilon}(u_{m_l};t) \bigg \vert\leq|b(u_{m_l})|\nonumber \\
+\frac{S(t)}{I(t)}&&\left\vert \frac{\int_{\mathbb{R}^n}...\int_{\mathbb{R}^n}\mathcal{M}_{\epsilon^2} (|u_0-U_0|)\prod_{i=0}^{m_l-1}\nabla_{u_{m_l}}\mathcal{M}_{\Delta t^{(l)}} (|u_{i+1}-u_i|)du_0...du_{m_l-1}}{\int_{\mathbb{R}^n}...\int_{\mathbb{R}^n}\mathcal{M}_{\epsilon^2} (|u_0-U_0|)\prod_{i=0}^{m_l-1}\mathcal{M}_{\Delta t^{(l)}} (|u_{i+1}-u_i|)du_0...du_{m_l-1}}\right \vert, \nonumber \\
\end{eqnarray}
as $m_l\to \infty$.
However, the integral terms can be computed explicitly. In fact in the limit of $m_l\to \infty$, we get
\begin{eqnarray}
\int_{\mathbb{R}^n}...\int_{\mathbb{R}^n}\mathcal{M}_{\epsilon^2} (|u_0-U_0|)\prod_{i=0}^{m_l-1}\mathcal{M}_{\Delta t^{(l)}} (|u_{i+1}-u_i|)du_0...du_{m_l-1}&=&\mathcal{M}_{\epsilon^2+t}(|u_{m_l}-U_0|). \nonumber \\
\end{eqnarray}
Therefore the upper bound reads
\begin{eqnarray}
\left \vert \nabla_{u_{m_l}} \log f_{U^\epsilon}(u_{m_l};t) \right \vert&\leq&|b(u_{m_l})|
+\frac{S(t)}{I(t)}\left\vert \frac{\nabla_{u_{m_l}}\mathcal{M}_{\epsilon^2+t}(|u_{m_l}-U_0|)}{\mathcal{M}_{\epsilon^2+t}(|u_{m_l}-U_0|)}\right \vert \nonumber \\
&&\leq C(t,u_0)\left(|u_{m_l}|+1\right),
\end{eqnarray}
for $t>0$.
\end{proof}
\begin{corollary}\label{cor:trans}
The measure of the process $\mu_{U^\epsilon}$ is the solution of the following transport equation
\begin{eqnarray}
\frac{\partial \mu_U(u;t)}{\partial t}&=&\left(-b_i (u)+\frac{\beta^2}{2}\frac{\partial }{\partial u_i}\log f_{U^\epsilon}(u;t)\right)\frac{\partial \mu_U(u;t)}{\partial u_i}.
\end{eqnarray}
\end{corollary}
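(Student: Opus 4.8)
The plan is to read the corollary as a rewriting of the transport equation already recorded in Remark~\ref{rem:fp}, the genuinely new content being that the logarithmic-gradient term on the right-hand side is now legitimate thanks to Lemma~\ref{lemma:space}. I would organise the argument in three short steps and then isolate the one place where care is genuinely needed.

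First, I would invoke Lemma~\ref{lemma:space} to assert $f_{U^\epsilon}(\cdot\,;t)\in K_1$ for every $t>0$. The crucial output is that $f_{U^\epsilon}(u;t)>0$ pointwise and that $\nabla_u\log f_{U^\epsilon}\in C_l^{\infty}(\mathbb{R}^n)$, so the factor $\partial_{u_i}\log f_{U^\epsilon}$ appearing in the claimed equation is well-defined, smooth, and of at most linear growth. In effect all the analytic effort---strict positivity of the density for $t>0$ established through the Girsanov representation, together with the finiteness of $M(f_{U^\epsilon})$ and $I(f_{U^\epsilon})$---has already been spent in the lemma, and the corollary only has to repackage it.

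Second, I would take as starting point the transport equation for the perturbed process supplied by Remark~\ref{rem:fp}, namely $\partial_t\mu_{U^\epsilon}=-b_i\,\partial_{u_i}\mu_{U^\epsilon}+\frac{\beta^2}{2}\,\frac{\partial^2}{\partial u_i\partial u_i}\mu_{U^\epsilon}$, which applies verbatim to $U^\epsilon$ because the coefficients $\Psi\in C^\infty_b$ and $\beta\neq0$ are unchanged and only the (now Gaussian) initial law differs. The entire task then reduces to converting the second-order diffusion term into the first-order advective term $\frac{\beta^2}{2}\,(\partial_{u_i}\log f_{U^\epsilon})\,\partial_{u_i}\mu_{U^\epsilon}$. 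Third, I would use the defining relation $d\mu_{U^\epsilon}=f_{U^\epsilon}\,du$ between measure and density. Reading it componentwise---exactly the one-dimensional convention used elsewhere in the paper---it gives $\partial_{u_i}\mu_{U^\epsilon}=f_{U^\epsilon}$, and positivity lets me write $\partial_{u_i}f_{U^\epsilon}=f_{U^\epsilon}\,\partial_{u_i}\log f_{U^\epsilon}$, so that
\[
\frac{\partial^2}{\partial u_i\partial u_i}\mu_{U^\epsilon}=\partial_{u_i}f_{U^\epsilon}=f_{U^\epsilon}\,\partial_{u_i}\log f_{U^\epsilon}=(\partial_{u_i}\log f_{U^\epsilon})\,\partial_{u_i}\mu_{U^\epsilon}.
\]
Substituting this into the diffusion term of the transport equation collapses the Laplacian into the logarithmic-gradient advection and produces precisely the stated equation.

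The hard part will not be any of the above algebra but rather making the measure--density identification of step three rigorous: the passage $\frac{\partial^2}{\partial u_i\partial u_i}\mu=(\partial_{u_i}\log f)\,\partial_{u_i}\mu$ summed over $i$ requires interpreting differentiation of the measure with care in $n$ dimensions, where a single cumulative primitive does not reproduce $f$ under each partial derivative simultaneously. I would dispose of this by carrying out the transformation coordinate-by-coordinate, as the paper already does for the chaos expansion, and by relying on $f_{U^\epsilon}\in C^\infty(\mathbb{R}^n)$ with $f_{U^\epsilon}>0$ from Lemma~\ref{lemma:space} to guarantee that every manipulation of $\log f_{U^\epsilon}$ is classically valid. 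With positivity in hand the remaining content is a one-line rearrangement, which is why the real load is borne by the lemma rather than by the corollary.
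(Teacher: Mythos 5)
Your proposal matches the paper's argument: the paper's proof is a single sentence invoking \cref{rem:fp} and \cref{lemma:space}, and your three steps (regularity of $\log f_{U^\epsilon}$ from the lemma, the transport equation for $\mu_{U^\epsilon}$ from the remark, and the rewriting of the diffusion term $\frac{\beta^2}{2}\partial_{u_i}\partial_{u_i}\mu$ as the advection $\frac{\beta^2}{2}(\partial_{u_i}\log f_{U^\epsilon})\,\partial_{u_i}\mu$) are exactly the computation that sentence leaves implicit. The dimensional subtlety you flag in identifying $\partial_{u_i}\mu_{U^\epsilon}$ with $f_{U^\epsilon}$ is already present in the paper's own statement of \cref{rem:fp} and is not resolved there either, so your treatment is, if anything, more explicit than the original.
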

\begin{proof}
The proof is straight-forward, by using \cref{rem:fp} and the result of \cref{lemma:space}, that $f_{U^\epsilon}(.,t) \in K_1$. 
\end{proof}
\subsection{Solution Existence-Uniqueness and Consistency}
\begin{theorem}
Let $U(t,\omega)$, $U^\epsilon(t,\omega)\in \mathbb{R}^n$ be solutions of the It\^{o} process \eqref{eq:ito-main} for initial conditions $U_0$ and $U_0^\epsilon$, respectively, where the drift $b=-\nabla \Psi$ fulfills $\Psi \in C_b^\infty$ and $\beta \neq 0$. Here $U_0\in \mathbb{R}^n$ is deterministic, whereas $U^\epsilon_0=U_0+\epsilon Z$, $Z(\omega)\in \mathbb{R}^n$ is a normally distributed random variable and $\epsilon \in \mathbb{R}$ is a non-zero arbitrary chosen parameter. \\ \ \\
Suppose $X^\epsilon(t,\omega)\in \mathbb{R}^n$ is a random variable in a space $(\Omega,\mathcal{G}^\epsilon,\mathcal{Q}^\epsilon)$, and evolves according to
\begin{eqnarray}
\label{eq:ito-weakp}
\frac{d}{dt}X^\epsilon_i(t,\omega)&=&b_i(X^\epsilon)-\frac{1}{2} \beta^2 \left [\nabla_{x_i} \log f_{X^\epsilon}(x;t) \right ]_{x=X^\epsilon(t,\omega)},
\end{eqnarray}
subject to the initial condition $U_0^\epsilon$. Here $f_{X^\epsilon}(x;t)=d\mathcal{Q}^\epsilon\left({X^\epsilon}^{-1}\right)$ is the probability density of the process \eqref{eq:ito-weakp}. Therefore
\begin{enumerate}
\item The process \eqref{eq:ito-weakp}, has a unique solution with $\mathbb{E}[{X^\epsilon}^2(t,\omega)]<\infty$ for $t\in[0,\infty)$.
\item For an arbitrary $g(.)\in C^2(\mathbb{R}^m)$, we have
\begin{eqnarray}
\mathbb{E}\left[g(X^\epsilon(t,\omega))\right]&=&\mathbb{E}\left[g(U^\epsilon(t,\omega))\right]  \\
\textrm{and} \ \ \ \ \ \lim_{\epsilon\to 0}\mathbb{E}\left[g(X^\epsilon(t,\omega))\right]&=&\mathbb{E}\left[g(U(t,\omega))\right].
\end{eqnarray}
\end{enumerate}
\end{theorem}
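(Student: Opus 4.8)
The plan is to convert the nonlinear, McKean--Vlasov-type equation \eqref{eq:ito-weakp}---whose drift depends on the unknown law of its own solution---into an ordinary differential equation with a \emph{fixed}, explicitly known vector field, and then to close the loop by verifying self-consistency. The crucial observation is that the It\^o density $f_{U^\epsilon}$ is already available and, by \cref{lemma:space}, belongs to $K_1$; in particular $\nabla \log f_{U^\epsilon}(\cdot;t)\in C_l^\infty(\mathbb{R}^n)$ for every $t>0$. I therefore adopt the ansatz $f_{X^\epsilon}(\cdot;t)=f_{U^\epsilon}(\cdot;t)$, under which \eqref{eq:ito-weakp} becomes $\dot X^\epsilon = V(X^\epsilon,t)$ with the known field
\begin{eqnarray}
V_i(x,t)&:=&b_i(x)-\frac{1}{2}\beta^2\,\nabla_{x_i}\log f_{U^\epsilon}(x;t). \nonumber
\end{eqnarray}
Because $b=-\nabla\Psi$ is bounded ($\Psi\in C_b^\infty$) and the logarithmic-gradient term grows at most linearly in $x$ by the $K_1$ membership, $V(\cdot,t)$ is smooth and satisfies a linear growth bound $|V(x,t)|\le C(t)(|x|+1)$ that is locally uniform in $t$.

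For item (1) I would first treat this fixed-field ODE. Local existence and uniqueness follow from the smoothness (hence local Lipschitz continuity) of $V$ in $x$ via Picard--Lindel\"of, pathwise for each realization of the initial datum $U_0^\epsilon$. The linear growth bound then precludes finite-time blow-up: a Gr\"onwall estimate applied to $\frac{d}{dt}|X^\epsilon|^2=2X^\epsilon\cdot V(X^\epsilon,t)\le C(t)(|X^\epsilon|^2+1)$ extends the solution to all $t\in[0,\infty)$ and, after taking expectations and using $\mathbb{E}[|U_0^\epsilon|^2]<\infty$, yields the moment bound $\mathbb{E}[|X^\epsilon(t)|^2]<\infty$. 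Uniqueness for the full nonlinear problem \eqref{eq:ito-weakp} then reduces to this: any solution must carry a law solving the continuity equation below, which by uniqueness is $f_{U^\epsilon}$, so its drift is forced to equal $V$ and the fixed-field argument applies verbatim.

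The self-consistency check validating the ansatz is the heart of the argument. The push-forward density of the flow of $V$ solves the continuity equation $\partial_t f+\nabla\cdot(Vf)=0$ with initial datum $f_{U_0^\epsilon}$. Substituting $f=f_{U^\epsilon}$ and using the identity $f_{U^\epsilon}\,\nabla\log f_{U^\epsilon}=\nabla f_{U^\epsilon}$ collapses the diffusive term into $-\frac{1}{2}\beta^2\Delta f_{U^\epsilon}$, so the continuity equation reduces exactly to the Fokker--Planck equation \eqref{eq:fp} satisfied by the It\^o density (\cref{rem:fp}). Hence $f_{U^\epsilon}$ solves the transformed continuity equation with the correct initial data; by the uniqueness results quoted in \cref{rem:fp} it is the \emph{only} such solution, so the genuine law of $X^\epsilon$ must coincide with it, confirming $f_{X^\epsilon}=f_{U^\epsilon}$. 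With equal densities, the first identity in item (2) is immediate, since for any $g\in C^2$ both expectations equal $\int_{\mathbb{R}^n} g(x)f_{U^\epsilon}(x;t)\,dx$.

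It remains to pass to the limit $\epsilon\to 0$. Writing $\mathbb{E}[g(U^\epsilon(t))]=\int_{\mathbb{R}^n}\Phi(u_0;t)\,\mathcal{M}_\epsilon(|u_0-U_0|)\,du_0$, where $\Phi(u_0;t):=\mathbb{E}[g(U(t))\mid U_0=u_0]$, I would invoke the Feller continuity from \cref{rem:sde} to ensure $\Phi(\cdot;t)$ is continuous, together with the bounded-variance estimate to control growth. Since $\mathcal{M}_\epsilon(|\cdot-U_0|)$ is a Gaussian approximation of the identity concentrating at $U_0$ as $\epsilon\to 0$, the integral converges to $\Phi(U_0;t)=\mathbb{E}[g(U(t))]$, the expectation under the deterministic initial condition; combined with the first identity this gives the stated limit. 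The main obstacle I anticipate is not any single estimate but the logical closure of the self-referential equation: one must argue simultaneously that a solution exists, that its law is pinned down by the linear continuity equation, and that this forces the drift---so the uniqueness theorem of \cref{rem:fp} for the transport/Fokker--Planck equation is doing the essential work, and its hypotheses (guaranteed here by $\Psi\in C_b^\infty$, $\beta\neq0$, and $f_{U^\epsilon}\in K_1$) must be verified to apply.
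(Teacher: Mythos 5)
Your proposal is correct and follows essentially the same route as the paper: you introduce the same auxiliary fixed-drift ODE whose vector field is built from the known It\^{o} density $f_{U^\epsilon}$, obtain well-posedness and the moment bound from the $K_1$ regularity of \cref{lemma:space} via Picard/Gr\"{o}nwall, identify the law of the solution with $f_{U^\epsilon}$ by showing the associated continuity/transport equation reduces to the Fokker--Planck equation and invoking its uniqueness (\cref{rem:fp}, \cref{cor:trans}), and finish with Feller continuity for the $\epsilon\to 0$ limit. The only cosmetic difference is that you phrase the identification through the divergence-form continuity equation for the density, whereas the paper works with the non-divergence transport equation for the measure; the substance is the same.
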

\begin{proof}
First let us show that the process 
\begin{eqnarray}
\frac{d}{dt}Y^\epsilon_i(t,\omega)&=&b_i(Y^\epsilon)-\frac{1}{2}\beta^2\left [\nabla_{y_i} \log f_{U^\epsilon}(y;t) \right ]_{y=Y^\epsilon(t,\omega)}
\label{eq:ode-weak}
\end{eqnarray}
with the initial condition $U_0^\epsilon$ has a unique solution with bounded variance for all $t>0$. Let $F(t,Y^\epsilon)$ denote the right hand side of Eq.~\eqref{eq:ode-weak}. For the existence-uniqueness proof of a bounded variance solution,  since $f_{U^\epsilon}(.;t)\in K_1$ according to \cref{lemma:space} and $b(.)\in C^\infty_b(\mathbb{R}^n)$, we get $F(t,.)\in C_l^\infty(\mathbb{R}^n)$.
Therefore the existence-uniqueness follows directly from the Picard iterations and Groenwall's inequality (see \cite{agarwal} for details). Furthermore, the boundedness of the variance comes from the Chebyshev lemma (see Theorem 1.8 in \cite{khasminskii}). \\ \ \\
Now let us turn to the measure induced by $Y^\epsilon$ i.e. $\mu_{Y^\epsilon}$. Let us define the map $\sigma_t(U_0^\epsilon(\omega))=Y^\epsilon (t,\omega)$ and hence $\mu_{Y^\epsilon}(\sigma_t(u);t)=\mu_{U^\epsilon_0}(u)$. Therefore $\mu_{Y^\epsilon}$ fullfills the following transport equation
\begin{eqnarray}
\label{eq:trans2}
\frac{\partial }{\partial t}\mu_{Y^\epsilon}(y;t)&=&-{F_i(t,y)}\frac{\partial }{\partial y_i} \mu_{Y^\epsilon}(y;t).
\end{eqnarray}
Note that since Eq.~\eqref{eq:ode-weak} has a unique solution, do does Eq.~\eqref{eq:trans2}. However due to \cref{cor:trans}, the measure induced by $U^\epsilon$ also fulfills Eq.~\eqref{eq:trans2}. Therefore $\mu_{Y^\epsilon}(y;t)=\mu_{U^\epsilon}(y;t)$, resulting in equivalence of Eqs \eqref{eq:ode-weak} and \eqref{eq:ito-weakp}. Furthermore 
\begin{eqnarray}
\mathbb{E}[g(X^\epsilon(\omega,t))]&=&\mathbb{E}[g(U^\epsilon(\omega,t))].
\end{eqnarray}
But since the It\^{o} process is Feller continuous \cite{oksendal}, we have
\begin{eqnarray}
\lim_{\epsilon\to 0}\mathbb{E}[g(U^\epsilon(\omega,t))]=\mathbb{E}[g(U(\omega,t))],
\end{eqnarray}
and hence
\begin{eqnarray}
\lim_{\epsilon \to 0}\mathbb{E}[g(X^\epsilon(\omega,t))]&=&\mathbb{E}[g(U(\omega,t))].
\end{eqnarray}
%
\end{proof}
\noindent To summarize, let $U^\epsilon$ and $U$ be solutions of the It\^{o} process subject to the initial conditions $U^\epsilon_0$ and $U_0$, respectively. As a consequence of the regularization and the introduced transformation, we can approximate the statistics of the true solution $U$ by statistic of $U^\epsilon$ through $\mathbb{E} [g(U^\epsilon(\omega,t))]=\mathbb{E}[g(X^\epsilon(\omega,t))]$. However due to well-posedness of Eq.~\eqref{eq:ito-main}, we obtain a mean square error 
\begin{eqnarray}
\mathbb{E}\left[(U(\omega,t)-U^\epsilon(\omega,t))^2\right] < C(t) \epsilon^2
\end{eqnarray}
bounded by $\epsilon^2$ and some constant $C(t)$ independent of $\epsilon$. Therefore the regularization costs us an error of $\mathcal{O}(\epsilon^2)$ in the mean square sense. \\ \ \\
\section{Chaos Expansion} \label{sec:chaos}
\noindent The computational advantage of the gradient formulation Eq.~\eqref{eq:weak-ito} over the original It\^{o} process Eq.~\eqref{eq:ito-main}, can be exploited through its chaos expansion. Actually while the dimension of the space in which the Brownian path is measurable increases in time, its gradient transformation only propagates randomness originated from the initial condition. Therefore the resulting logarithmic gradient transformation behaves like an ODE with an uncertain initial condition. \\ \ \\
Let us consider an initial condition $X_0(\omega):\Omega \to \mathbb{R}^n$ with a probability density $f_{X_0}(x)=\mathcal{M}_{\epsilon}(|x-U_0|)$, where $|\epsilon|>0$ and $U_0\in \mathbb{R}^n$. In the following, we present the corresponding Hermite chaos expansion of the process \eqref{eq:weak-ito} for $X(\omega,t):\Omega \times \mathbb{R}^+\to \mathbb{R}^n$ subject to $X_0$. For more details on the Hermite chaos, and in general polynomial chaos expansions see \cite{karniadakis2}. 
The expansion is performed on the map $M(\xi(\omega),t)=X(\omega,t)$, where $\xi\in \mathbb{R}^{n}$ is a normally distributed random variable, hence 
\begin{eqnarray}
| \nabla_{q} M |  f_{X}(M;t)&=&f_\Xi (q),
\label{eq:consist}
\end{eqnarray}
where $f_\Xi(q)=\mathcal{M}_1(q)$ and $q \in \mathbb{R}^{n}$. In practice, Eq.~\eqref{eq:consist} is only employed to find the initial condition of $M$ (which in our case of $X_0$ initially being Gaussian distributed, the map becomes trivial), afterwards simply the coefficients of the expanded $M$ are propagated.\\ \ \\
The map evolves according to $X$ and thus
\begin{eqnarray}
\frac{d}{dt}{M}_i(\xi(\omega),t)&=&\overbrace{b_i(M)-\frac{1}{2}\beta^2 \left[\nabla_{x_i}\log f_{{X}}(x;t) \right]_{M}}^{F_i(t,M)}.
\label{eq:weak3}
\end{eqnarray}
Since $\mathbb{E}[M^2]<\infty$, we conclude $M\in L^2(d\mu_\Xi)$, where $L^2(d\mu_\Xi)$ is the space of square integrable functions with the weight $d\mu_\Xi(q)=f_\Xi(q) dq$. Furthermore note that since $b(.)$ and the Fisher information are bounded, we have $F(t,.)\in L^2(d\mu_\Xi)$. Therefore $M$ admits a Hermite expansion \cite{sansoe}
 \begin{eqnarray}
{M}_{i}(\xi,t)&=&\lim_{p\to \infty}\sum_{\alpha \in \mathcal{J}_{n}^p} m_{i,\alpha}(t) {H}_{\alpha}(\xi)
\label{eq:exp-init}
\end{eqnarray}
for each component $i\in \{1,...,n\}$, where ${H}_\alpha$ and $\mathcal{J}$ are defined in \eqref{eq:def-wick} and \eqref{eq:multi}, respectively. The coefficients follow
\begin{eqnarray}
m_{i,\alpha}(t)&=&\left \langle M_{i} ,{H}_\alpha \right \rangle_{\mu_{\Xi}}, \label{eq:proj}
\end{eqnarray}
 with the inner product defined based on the Gaussian weight
\begin{eqnarray}
\langle h,g \rangle_{\mu_{\Xi}}&=&\int_{\mathbb{R}^{n}}h(q)g(q)f_{\Xi}(q)d q. 
\end{eqnarray}
Therefore  
\begin{eqnarray}
\frac{d m_{i,\alpha}}{dt}&=&\langle b_i,{H}_\alpha\rangle_{\mu_\Xi}-\frac{1}{2}\beta^2\int_{\mathbb{R}^{n}}{{H}_\alpha}(\xi)\left(\nabla_{x_i}  \log f_{X}(x;t)\right)_{x=M}d\mu_\Xi \nonumber \\
&=&\langle  b_i,{H}_\alpha \rangle _{\mu_\Xi}+\frac{1}{2}\beta^2\bigg \langle \left(\frac{\partial {M}_l}{\partial \xi_k}\right)^{-1},\frac{\partial {H}_\alpha}{\partial \xi_l}\bigg \rangle_{\mu_\Xi} ,\label{eq:ODE}
\end{eqnarray}
and 
\begin{eqnarray}
\frac{\partial {{M}}_i}{\partial \xi_k}\left(\frac{\partial {{M}}_j}{\partial \xi_k}\right)^{-1}&=&\delta_{ij},
\end{eqnarray}
with $\delta$ being the Kronecker delta. Note that in deriving the last step of Eq.~\eqref{eq:ODE}, the fact that $f_{\Xi}$ vanishes at the boundaries together with Eq.~\eqref{eq:consist} have been used. Moreover since $f_{X}, f_{\Xi} \in K_1$, the inverse of  $\nabla_\xi {M} $ exists which can be seen again from Eq.~\eqref{eq:consist}. 
It is important to emphasize that the evolution of the coefficients $m_{i,\alpha}$ do not directly depend on $f_X$. By taking advantage of  the measure transform \eqref{eq:consist},  no explicit knowledge of the density $f_X$ is required. \\ \ \\
In practice, basides the error associated with the regularization of the initial condition, three types of numerical errors should be controlled in order to compute the evolution of the coefficients $m_{i,\alpha}$. First type comes through truncation of the Hermite expansion \eqref{eq:exp-init}. Second is due to the inner products $\langle .,. \rangle_{\mu_\Xi} $, where the Hermite-Gauss quadrature can be employed. And third, the error arising from the time integration which can be performed e.g. by the Runge-Kutta method, should be curbed.
\section{Conclusion} 
This study proposed a transformation of the diffusion arising from the white noise into a transport induced by logarithmic gradient of the probability density. The well-posedeness of such a transformation for an It\^{o} process with strong regularity assumptions was shown. As a result, the transformed It\^{o} process behaves similar to an ODE with uncertain initial condition. Therefore the process remains measurable with respect to its initial condition resulting in interesting computational advantages. The relevance of the transformation was discussed by employing the chaos expansion technique. In follow up studies, besides analyzing the computational performance of the resulting chaos expansion, the author will investigate possible generalization of the transformation for a broader class of stochastic processes driven by the white noise.

\section*{Acknowledgement}
The author is grateful to Jan Hesthaven for his valuable comments on this study.

\bibliographystyle{siamplain}
\bibliography{references}

\end{document}